\newcommand{\R}{\mathbf{R}}
\newcommand{\E}{\mathbf{E}}
\newcommand{\h}{\mathbf{H}}
\newcommand{\II}{\mathrm{I}\hspace{-0.8pt}\mathrm{I}}
\newcommand{\id}{\text{id}}
\newcommand{\M}{\E^{n+1}_1}
\newcommand{\dS}{\mathbf{dS}^{n+1}}
\newcommand{\adS}{\mathbf{adS}^{n+1}}
\theoremstyle{plain}
\newtheorem{thm}{Theorem}[section]
\newtheorem{cor}[thm]{Corollary}
\newtheorem{lem}[thm]{Lemma}
\newtheorem{definition}[thm]{Definition}
\theoremstyle{definition}
\newtheorem{remark}[thm]{Remark}
\DeclareMathOperator{\hess}{\overline{\nabla}^2\!}
\DeclareMathOperator{\Sec}{Sec}
\renewcommand{\tilde}{\widetilde}
\DeclareMathOperator{\length}{length}
\DeclareMathOperator{\sg}{sgn}
\numberwithin{equation}{section}
\begin{document}

\title{Space-time convex functions and sectional curvature}

\author{Stephanie B. Alexander}
\address{1409 W. Green St., Urbana, Illinois 61801}
\email{sba@illinois.edu}
\thanks{This work was partially supported by a grant from the Simons Foundation (\#209053 to Stephanie Alexander).}
\author{William A. Karr}
\address{1409 W. Green St., Urbana, Illinois 61801}
\email{wkarr2@illinois.edu}
\thanks{This material is partially based upon work supported by the National Science Foundation Graduate Research Fellowship to William Karr under Grant No. DGE 11-44245. Any opinion, findings, and conclusions or recommendations expressed in this material are those of the authors and do not necessarily reflect the views of the National Science Foundation.}

\begin{abstract}
We show that 
in Lorentzian manifolds, 
sectional curvature bounds  of the form $\mathcal{R}\le K\,$, as defined by Andersson and Howard, are closely tied to space-time convex and $\lambda$-convex ($\lambda>0$) functions, as defined by Gibbons and Ishibashi.  Among the consequences are a natural construction  of such functions, and an analogue, that applies to domains of a new type, of a theorem of Al\'ias, Bessa and deLira ruling out trapped submanifolds.
\end{abstract}
\maketitle 


\section{Introduction}\label{introduction}
A study of the possible uses of convex functions in General Relativity was initiated by Gibbons and Ishibashi, according to whom:
``Convexity and convex functions play an important role in theoretical physics $\ldots\ $ 
[and] also have important applications to geometry, including Riemannian geometry  
$\ldots\ $ 
It is surprising therefore that, to our knowledge, 
that techniques making use of convexity and convex functions have played no great role in General Relativity'' \cite{gi}.

Gibbons and Ishibashi introduce and mainly consider  ``space-time convex'' functions  on Lorentzian manifolds $(M,g)$, or  more generally,  functions $f$ satisfying
$$\hess f \ge \lambda\,g,\ \ \ \lambda >0.$$
They find  examples and non-examples of such functions  on regions in cosmological space-times and black-hole space-times.  They show,  
for example, that such functions rule out closed marginally  inner and outer trapped surfaces. Curvature bounds do not arise in their considerations.

The purpose of this note is to show that sectional curvature bounds  of the form $\mathcal{R}\le K\,$ are closely tied to space-time convex functions.  Among the consequences:
 \begin{itemize}
 \item
A natural construction  of such functions.
 \item
New domains that cannot support
trapped submanifolds, namely a full neighborhood of a point $\,q$, rather than a neighborhood of $q$ 
in the
chronological
 future of $q$ as has been considered previously, 
 in particular by Al\'ias, Bessa and deLira \cite{abl}.

\end{itemize}

The bound $\mathcal{R}\le K\,$,  introduced by Andersson and Howard \cite{ah}, extends $\Sec\le K$ from the Riemannian to the semi-Riemannian setting by requiring spacelike sectional curvatures to be $\le K$ and timelike ones to be $\ge K$.   
Equivalently,
the curvature tensor is required to satisfy
$$
g(R(v,w)v,w)\, \le
\,K \bigl(g( v,v )\,g(w,w)  - g(v,w)^2\bigr).
$$
For $\mathcal{R}\ge K\,$, reverse the inequalities. 

In addition, we 
indicate connections between investigations that have been  pursued independently by various authors, including: 
\begin{itemize}
 \item
Comparison theorems for Lorentzian distance on
domains in the
chronological
future of a source point or hypersurface on which the source has no Lorentzian cut points, given  timelike sectional curvature controls
(see for example  \cite{egk,ahp,imp,abl}). 

\item
Hessian comparisons   on level hypersurfaces in exponentially embedded  neighborhoods of a point or hypersurface, 
given a sectional curvature bound of the form 
$\mathcal{R}\le K\,$ or $\mathcal{R}\ge K\,$ 
\cite{ah,ab-lorentz}.  
\item
Space-time convex functions \cite{gi}. 
\end{itemize}

\subsection{Outline of the paper}
Section 2 is an introduction to space-time convex and $\lambda$-convex functions, as defined in  \cite{gi}. 

Section 3 summarizes certain theorems about Hessian and Laplacian comparisons on the Lorentzian distance function from a point or achronal spacelike hypersurface, under  comparisons on timelike sectional curvature (\cite{egk,ahp,imp,abl}).


Section 4 describes results 
from \cite{ah, ab-lorentz} 
concerning the conditions $\mathcal{R} \geq K$ and $\mathcal{R} \leq K$ in semi-Riemannian manifolds.
 In particular, in \cite{ah}  Andersson and Howard prove a comparison theorem for matrix Ricatti equations which applies to the second fundamental forms  of parallel families of hypersurfaces  under curvature comparisons.  In \cite{ ab-lorentz}, this theorem is adapted to tubes around points; as an application, the geometric meaning of the bounds $\mathcal{R} \geq K$ and $\mathcal{R} \leq K$ is found by introducing signed lengths of geodesics.

In section 5, we use this framework to rule out trapped submanifolds in an exponentially embedded neighborhood of a point in a space-time satisfying $\mathcal{R} \leq K$.


\section{Space-time convex functions}
\begin{definition}
\label{def:lambda-convex1}
Given smooth   functions $f:M\to\R$ and $\lambda:M\to\R$ on a semi-Riemannian manifold $(M,g)$, $f$ will be called \emph{$\lambda$-convex} if the Hessian $\hess f$ satisfies 
\begin{equation}\label{eq:space-time-convex1}
\hess f \ge \lambda\,g,
\end{equation} 
or equivalently, 
\begin{equation}\label{eq:space-time-convex2}
(f\circ\gamma)'' \ge (\lambda \circ \gamma)  \,g(\gamma',\gamma') 
 \end{equation}
 for every geodesic $\gamma$.
 
 Suppose $M$ is Lorentzian.  We say $f$ is \emph{space-time $\lambda$-convex} if  $f$ is $\lambda$-convex for some \emph{positive} function $\lambda$, and
 $\hess f$ has Lorentzian signature.
 \end{definition}
 
Note that this definition differs from the classical definition of convexity in that the right-hand sides of (\ref{eq:space-time-convex1}) and (\ref{eq:space-time-convex2}) need not be positive when $\lambda>0$.  Rather, controlled concavity is allowed along timelike geodesics, 
and is imposed in the definition of space-time convexity.
 
One of the simplest examples of a space-time $\lambda$-convex function is 
\begin{equation}
\label{eq:mod-dist-0}
f(\mathbf x, t)=\frac{1}{2}(\mathbf x\cdot\mathbf x-\lambda t^2),\quad (\mathbf x, t)\in\M,
\end{equation}
on Minkowski space for some constant $0<\lambda\le 1$.
 
 As pointed out in \cite{gi}, the geometric meaning of space-time convexity is that at each point, the forward light cone defined by the Hessian $\,\hess f\,$ lies
  inside the light cone defined by the space-time metric.

Definition \ref{def:lambda-convex1} is consistent with current Riemannian/Alexandrov usage of ``$\lambda$-convex'' (see \cite{p}); and 
also with the definition of ``space-time convex'' in \cite{gi} except that our $\lambda$ is a positive function and Gibbons and Ishibashi take $\lambda$ to be a positive constant.
(However, Definition \ref{def:lambda-convex1} differs from the usage in \cite{ab-lorentz}.)

In \cite{gi}, Gibbons and Ishibashi begin  an investigation  of the geometric implications  of space-time convex functions.  For example, they show that a space-time with a closed marginally inner and outer trapped surface  cannot support a space-time convex function.  
  
Here a \emph{marginally inner and outer trapped surface} $\Sigma$  is a 
spacelike submanifold of codimension 2 
whose mean curvature 
vanishes. 

Seeking
examples of space-time convex functions,  Gibbons and Ishibashi consider \emph{Robertson-Walker spaces} 
\[M=-I\times_fF,\]
that is, $M$ is the product manifold carrying the  warped product metric 
\[
-d\tau^2 +f^2
ds^2_F
\]
where $I=(a,b)$,\, $a\in [-\infty,\infty)$,\, $b\in (-\infty,\infty]$\,, $f:I\to \R_+$,
and  $F$ has constant sectional curvature. 
They ask when the function
\begin{equation}\label{eq:G-I-fn}
-f^2/2
 \end{equation}
is space-time convex (here we use $f$ to denote both the  warping function and its lift to $M$). For instance, various cosmological charts are considered on de-Sitter space $\dS$ and anti-de-Sitter space $\adS$. One of these yields an affirmative answer: namely, the function (\ref{eq:G-I-fn}) is space-time convex on the region 
$$(0,\pi/2)\times_{\sin}\h^n$$
in $\adS$. 

Gibbons and Ishibashi do not consider curvature bounds when seeking examples. The perspective of space-times with curvature bounds of the form $\,\mathcal{R}\le K\,$ suggests an alternative,  namely analogues of the ``square norm'' ((\ref{eq:mod-dist-0}) with $\lambda =1$).
For instance,
these analogues yield space-time convex functions adapted 
to 
some of the domains in de-Sitter and anti-de-Sitter space considered in \cite{gi}.  

Our theorems show that space-time convex functions arise naturally in all Lorentzian manifolds satisfying  $\mathcal{R}\le K$.

\section{Comparisons for Lorentzian distance}
\label{sec:lor-dist}
 
Let us mention some related works concerning the Lorentian distance functions from a point or spacelike hypersurface.  
All  these investigations are restricted to domains  containing no Lorentzian cut points of the source point or hypersurface.

\begin{enumerate}
\item
\label{egk}
In \cite{egk}, Erkekoglu, Garcia-Rio and Kupeli 
prove  Hessian and Laplacian comparison theorems  for level sets of the Lorentzian distance function from points or  from achronal spacelike hypersurfaces, in two space-times $M$ and $\tilde M$. They consider corresponding timelike,  distance-realizing unit geodesics in $M$ and $\tilde M$, where sectional curvatures of 2-planes tangent to  the geodesics at corresponding values of the time parameter are no greater in $M$ than in $\tilde M$. Some space-time singularity theorems are given.

\item
In \cite{ahp}, Al\'ias, Hurtado and Palmer study the restriction of Lorentzian distance from a point or spacelike hypersurface to a spacelike hypersurface satisfying the Omori-Yau maximum principle. 
  Under constant bounds either above or below on timelike sectional (or Ricci) curvatures, they obtain 
sharp estimates on the mean curvature of such hypersurfaces.

\item
In \cite{imp},
Impera studies Hessian and Laplacian comparisons  for  Lorentzian distance from a point, assuming timelike sectional curvatures are bounded above or below by a function of the Lorentzian distance.  Estimates are obtained on the higher order mean curvatures of spacelike hypersurfaces satisfying the Omori-Yau maximum principle.

\item
In \cite{abl}, Al\'ias, Bessa and deLira 
prove non-existence results and sharp mean curvature estimates for   trapped submanifolds (of arbitrary codimension), based on comparison inequalities for the Laplacian of the restriction to a spacelike submanifold  of the Lorentzian distance function from a point or achronal spacelike hypersurface.  They use a 
weak Omori-Yau maximum principle equivalent to stochastic completeness. 
\end{enumerate}

\section{Curvature bounds $\,\mathcal{R}\le K$, \,$\mathcal{R}\ge K$.}
Recall that  $\mathcal{R}\le K\,$  means that spacelike sectional curvatures are $\le K$ and timelike ones are $\ge K$. For $\mathcal{R}\ge K\,$, reverse the inequalities.  
(Note that $\mathcal{R}\le K\le K'$ does not imply   $\mathcal{R}\le K'\,$!\,)

\subsection{Geometric meaning}
Briefly, $\mathcal{R}\le K\,$ means, as in the Riemannian case, that unit geodesics radiating from a point ``repel'' each other at least as much as in a space of  constant curvature $K$, assuming the same initial conditions.  However, repulsion here is meant in the \emph{signed} sense.  In particular, in the Lorentzian case,  if the initial direction of variation of the geodesics is timelike, we see \emph{negative repulsion}, that is, 
at least as much \emph{attraction} as in a Lorentzian space of constant curvature $K$. This is explained below in Subsections \ref{subsec:point-comp} and \ref{subsec:characterize}.

\subsection{GRW spaces}

Space-times satisfying  $\mathcal{R}\le K$ and $\mathcal{R}\ge K$ are abundant.  We mention  as examples, \emph{generalized Robertson-Walker (\,GRW\,) spaces}, 
namely warped products 
$\,M=(-I)\times _f F\,$ 
for arbitrary Riemannian manifolds $\,F$.

\begin{lem}\cite{ab-lorentz}
\label{lem:R>K}
A GRW space
$ M=-I\times_fF$\, satisfies  \,$\mathcal{R}\le K$\, if and only if  \,$f:I\to \R_+$ is
 $(-Kf)$-convex, that is,
 \[
f''\ge -Kf,
\]
and \,$F$ either is $1$-dimensional or has sectional curvature \,$\le C$\, where
 \[
 C\ =  \ \inf\,( Kf^2-(f')^2).
\]
(For $\mathcal{R}\ge K$,  reverse the inequalities and substitute \,$\sup$\,  for \,$\inf$.)
\end{lem}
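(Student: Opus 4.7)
The plan is to apply the warped product curvature identities (as in O'Neill, Chapter 7) to extract necessary and sufficient conditions for $\mathcal{R}\le K$ from the sectional curvatures of the non-degenerate 2-planes of $M$. At each point $(\tau,x)\in M$, the $g$-orthogonal splitting $T_{(\tau,x)}M=\R\partial_\tau\oplus T_xF$ lets one normalize any 2-plane $\Pi$ into one of two types: either (i) $\Pi\subset T_xF$ (a fiber plane), or (ii) $\Pi$ has a basis of the form $\{\partial_\tau+\alpha V_0,\,V_1\}$ with $V_0,V_1\in T_xF$ orthogonal in the fiber metric and $V_0$ unit in $F$. So the task reduces to two canonical cases plus an interpolation.

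For type (i), which is spacelike, the warped product formula gives $K_M(\Pi)=\bigl(K_F(\Pi)+(f')^2\bigr)/f^2$; the sign $+(f')^2$ (rather than $-(f')^2$) reflects that the timelike base satisfies $|\nabla^B f|_B^2=-(f')^2$. Demanding $\mathcal{R}\le K$ on every such plane amounts to $K_F(\Pi)\le Kf^2-(f')^2$ at each $\tau$; taking the infimum in $\tau$ yields the fiber condition $K_F\le \inf_\tau(Kf^2-(f')^2)$ of the lemma (vacuous when $\dim F=1$). For type (ii) with $\alpha=0$, the plane $\mathrm{span}(\partial_\tau,V_1)$ is timelike, and the identity $R(V,X)X=(H^f(X,X)/f)V$ with $H^f(\partial_\tau,\partial_\tau)=f''$ reduces the condition $\mathcal{R}\le K$ on this plane, after the sign flip appropriate to a timelike plane, precisely to the $(-Kf)$-convexity of $f$.

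For the general mixed plane (type (ii) with $\alpha\ne 0$), I would expand $g(R(v,w)v,w)-K\bigl(g(v,v)g(w,w)-g(v,w)^2\bigr)$ with $v=\partial_\tau+\alpha V_0$, $w=V_1$, using all of O'Neill's warped product curvature identities together with the orthogonality relations $g_M(\partial_\tau,V_i)=0$ and $g_M(V_0,V_1)=0$. Thanks to $V_0\perp V_1$ in $F$, no $\alpha^1$ cross term survives, and the expression organizes as an $\alpha^0$-piece controlled by the quantity $f''+Kf$ (from step (ii), $\alpha=0$) and an $\alpha^2$-piece controlled by $K_F(V_0,V_1)-Kf^2+(f')^2$ (from step (i)). The two canonical conditions give each coefficient the correct sign, so their sum satisfies the inequality for every $\alpha$. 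The main obstacle is exactly this interpolation step, where one must verify that the expansion decouples cleanly and that the signs line up, so that no additional constraint is imposed by general mixed planes. The reverse implication for $\mathcal{R}\ge K$ then follows by reversing all inequalities and replacing $\inf$ by $\sup$.
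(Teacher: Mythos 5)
The paper offers no proof of this lemma --- it is imported verbatim from [AB08] --- so there is no in-house argument to compare against; I can only assess your proposal on its merits. Your strategy (normalize every $2$-plane to a fiber plane or to $\mathrm{span}(\partial_\tau+\alpha V_0,\,V_1)$ with $V_0\perp V_1$, apply O'Neill's warped-product curvature identities, and decouple the mixed-plane inequality in $\alpha$) is the standard and correct one, and the step you single out as ``the main obstacle'' does go through: since $g_F(V_0,V_1)=0$, the terms $R(\partial_\tau,V_1)V_0$ and $R(V_0,V_1)\partial_\tau$ vanish by O'Neill's identities, so $g(R(v,w)v,w)-K\bigl(g(v,v)g(w,w)-g(v,w)^2\bigr)$ is affine in $\alpha^2$ with coefficients $(Kf-f'')/f$ and $\bigl(K_F+(f')^2-Kf^2\bigr)/f^2$; it is nonpositive for all $\alpha$ exactly when both canonical conditions hold. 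You should display that two-line expansion rather than leave it as a hope, but that is not where the real trouble lies. Your fiber-plane computation, including the sign of $|\grad^B f|^2=-(f')^2$, is correct.

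The genuine gap is the sign in the timelike case. Your own ingredients give sectional curvature $f''/f$ for $\mathrm{span}(\partial_\tau,V)$ (check: $\cosh''/\cosh=1$ for de Sitter $-\R\times_{\cosh}\mathbf{S}^n$), and ``timelike sectional curvature $\ge K$'' then reads $f''\ge Kf$, \emph{not} $f''\ge -Kf$. There is no further ``sign flip appropriate to a timelike plane'' available: the flip coming from $g(v,v)g(w,w)-g(v,w)^2<0$ is already spent in converting the master inequality into ``timelike sec $\ge K$.'' Concretely, de Sitter has constant curvature $1$ and hence does not satisfy $\mathcal{R}\le 2$, yet $f''\ge -2f$ and $\Sec_F=1\le\inf(2f^2-(f')^2)=2$ both hold; conversely the chart $(0,\pi/2)\times_{\sin}\h^n$ of $\adS$ satisfies $\mathcal{R}\le-1$ but violates $f''\ge f$. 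So the condition your computation actually delivers is $f''\ge Kf$. (The discrepancy traces to the gloss in the statement: in [AB08] the $(-Kf)$-convexity is taken on the base $-I$ with its negative-definite metric, and $\hess f\ge -Kf\,(-d\tau^2)$ unpacks to $f''\ge Kf$; reading it against the positive-definite metric on $I$ produces the spurious $f''\ge-Kf$.) As written, your timelike step asserts a conclusion that the computation does not, and cannot, produce; carry out the evaluation explicitly and you will land on $f''\ge Kf$.
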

%

\subsection{Comparisons based at a point}
\label{subsec:point-comp}
Let $M$ be a semi-Riemannian manifold, and $U$ be the diffeomorphic image under $\exp_q$ of a  star-shaped region in $T_qM$ about $O$.  
Let $\gamma_{p,q}$  be the geodesic path in $U$ from $p$ to $q$ that is distinguished by this diffeomorphism.


Define the  \emph{signed energy function}
$\,E_q:U\to\R\,$ by 
\begin{equation}\label{eq:E}
E_q(p)= (\sg\gamma_{p,q})\,(\length\gamma_{p,q})^2,
\end{equation}
where $\sg\,\gamma$ take values $1,0,-1$ according to whether $\gamma_{p,q}$ is spacelike, null or timelike, respectively.  

Signing  was shown in \cite{ab-lorentz} to be the key to  geometric understanding of the curvature bounds $\mathcal{R}\le K$ and $\mathcal{R}\ge K$.  In particular, Ansersson and Howard do not consider signed distance or energy.

For a fixed choice of
$K\in\R$ and $q\in U$, define $\,f_{K,q}:U\to\R\,$  by
\begin{equation}\label{eq:modE}
f_{K,q} =\sum_{n=1}^\infty \frac{(-K)^{n-1}(E_q)^n}{(2n)!}
=\begin{cases}
E_q/2, & K=0,\\
(1 - \cos\sqrt{KE_q})/K , & K\ne 0.
\end{cases}
\end{equation}
Here the argument of $\cos$ may be imaginary, yielding $\cos it=\cosh t$.

\begin{remark}
\label{rem:mod-shape}
Note that on the lift of $\,U\,$ to $\,T_qM$ by  $(\exp_q)^{-1}\,$, the lift of $f_{K,q}$ is the square norm if $\,K=0$,  and an analogue if $\,K\ne 0 $. 
The possible values of $(1-Kf_{K,q})$ are $1$, $\cos\sqrt{|KE_q|}$ and $\cosh\sqrt{|KE_q|}$.
\end{remark}

Set
$f=f_{K,q}$ as in (\ref{eq:modE}), for a fixed choice of
$K$ and $q$.  Define the
\emph{modified shape operator} 
$S=S_{K,q}$ 
to be the self-adjoint operator associated with the
Hessian of $f$, namely,
\begin{equation}
\label{eq:Sdef1}
Sv=\overline{\nabla} _v \overline{\nabla} f
\end{equation}
where $\overline{\nabla}$ is the covariant derivative of $M$.

Note that the levels of  $f$ are the levels of $E_q$. The  
form of  $f$ was
chosen for analytic convenience 
(following \cite{Kr}), so that if  $M$ has constant curvature $K$ then $S$ is
a scalar multiple of the identity, namely
$S=(1-Kf)\,I$.

The modified shape operator $S$ has
the following further properties:  along a nonnull geodesic
from $q$, its restriction to normal vectors is a scalar multiple of
the second fundamental form of the level  hypersurfaces of
$E_q$; it is smoothly defined on the regular set of $E_q$, hence along
null geodesics from $q$ (as the second fundamental forms are not);
and finally, it satisfies a matrix Riccati equation along every
geodesic from $q$, after reparametrization as an integral curve of $\overline{\nabla}
f_{K,q}$.

The proof of the following theorem is by adapting to the set-up just described, a comparison theorem of Andersson and Howard  \cite[Theorem 3.2]{ah} that applies to exponentially embedded tubes about  hypersurfaces rather than points (see Subsection \ref{subsec:ah}). 
%
%

We say two geodesic segments $\sigma$ and $\tilde{\sigma}$ in
semi-Riemannian manifolds $(M,g)$ and $(\tilde{M},\tilde g)$ \emph{correspond} if
they are defined on the same affine parameter interval  and satisfy
$g(\sigma',\sigma')
=\tilde g\,(\tilde{\sigma}',\tilde{\sigma}')$. Let $R_{\sigma'}$ be the self-adjoint  operator
$R_{\sigma'}v = R(\sigma',v)\sigma'$, and similarly for $\tilde{R}_{\tilde\sigma'}$.

In the special case that the geodesics $\sigma$ and $\tilde\sigma$ are timelike, the following theorem includes    comparison inequalities of Erkekoglu, Garcia-Rio and Kupeli \cite[Theorem 3.1]{egk} for level hypersurfaces of the Lorentzian distance from a point.
However, here we are analyzing an  exponentially embedded \emph{neighborhood of a point} rather than restricting to the 
chronological future.

\begin{thm}\cite{ab-lorentz}
\label{thm:Scompare1}
%
%
Let $M$ and $\tilde{M}$ be semi-Riemannian manifolds of the same
dimension and index.  
For $q\in M$ and $\tilde q\in\tilde M$, let $U$ and $\tilde U$ be 
diffeomorphic images under $\exp_q$ and $\exp_{\tilde q}$  respectively of 
star-shaped regions about the origin in $T_qM$ and $T_{\tilde q}\tilde M$.
  Let $\sigma$ and $\tilde{\sigma}$ be
corresponding non-null geodesics  in $U$ and $\tilde U$ respectively, radiating from 
 $q$ and $\tilde q$.

Identify linear operators on $T_{\sigma(t)}M$ with
those on  $T_{\tilde{\sigma}(t)}\tilde{M}$ by parallel translation to the
basepoints, together with an isometry of $T_qM$ and
$T_{\tilde{q}}\tilde{M}$ that identifies $\sigma'(0)$ and
$\tilde{\sigma}'(0)$.  

Suppose $R_{\sigma'}\le\tilde{R}_{\tilde{\sigma}'}$
at corresponding points of $\sigma$ and  $\tilde{\sigma}$.  Then the modified
shape operators $S=S_{K,q}$ and $\tilde S=\tilde S_{K,q}$, as in (\ref{eq:Sdef1}),
satisfy $S\ge\tilde{S}$ (that is, $S-\tilde{S}$ is positive semidefinite) at corresponding points of
$\sigma$ and  $\tilde{\sigma}$.
\end{thm}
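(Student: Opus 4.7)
The plan is to recast the claim as a matrix Riccati comparison and invoke the Andersson--Howard theorem \cite[Theorem 3.2]{ah}, adapting its hypersurface-tube setting to the point-tube setting described above.

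Fix corresponding non-null geodesics $\sigma\subset U$ and $\tilde\sigma\subset\tilde U$ and reparametrize them in parallel as integral curves of $\overline{\nabla}f_{K,q}$ and $\overline{\nabla}f_{K,\tilde q}$. As the excerpt recalls, $S=S_{K,q}$ then satisfies a matrix Riccati equation along $\sigma$ whose tidal term is a fixed scalar multiple of $R_{\sigma'}$, and likewise for $\tilde S$ along $\tilde\sigma$ with tidal term a multiple of $\tilde R_{\tilde\sigma'}$. Both equations have the common constant-curvature-$K$ model solution $S=(1-Kf)I$, and in exponential coordinates $f_{K,q}$ agrees with the Minkowski model quadratic $\tfrac12 g_q(v,v)$ to second order (Remark \ref{rem:mod-shape}); hence under the prescribed linear isometric identification of $T_qM$ with $T_{\tilde q}\tilde M$ the initial data match as $S(0)=\tilde S(0)=I$. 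At corresponding parameter values the scalar factor $(1-Kf)$ is common to both sides, so the hypothesis $R_{\sigma'}\le\tilde R_{\tilde\sigma'}$ translates directly to a comparison between the tidal operators of the two Riccati equations. The Andersson--Howard Riccati argument then applies verbatim: among self-adjoint solutions of a matrix Riccati equation with matched initial data, the one with the smaller tidal operator dominates, so $S\ge\tilde S$ at corresponding points.

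The main obstacle is sign bookkeeping and the point-tube adaptation. The signed energy $E_q$ has opposite signs on spacelike and timelike $\sigma'(0)$, and the radicand in (\ref{eq:modE}) passes from real to imaginary accordingly; one must check that the modified Riccati equation and its comparison stay in a single algebraic form yielding $S\ge\tilde S$ uniformly in both signatures (this is exactly why the modification via $f_{K,q}$ rather than $\tfrac12 d^2$ is needed). Moreover, \cite[Theorem 3.2]{ah} is stated with initial data on a hypersurface, whereas here the ``initial'' object is the basepoint; this is handled by the matched identity initial condition for $S$ coming from the exponential-coordinate expansion of $f_{K,q}$, which is precisely the point-tube adaptation the excerpt alludes to. Once these items are verified, the argument proceeds exactly as in the proof of \cite[Theorem 3.2]{ah}.
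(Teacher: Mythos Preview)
Your proposal is correct and follows precisely the approach the paper indicates: the paper does not give a detailed proof but cites \cite{ab-lorentz} and states that the argument is by adapting Andersson--Howard's Riccati comparison \cite[Theorem~3.2]{ah} from hypersurface tubes to point tubes, exactly as you outline. Your identification of the two adaptation issues---the uniform handling of signatures via the modification $f_{K,q}$, and the replacement of hypersurface initial data by the matched identity initial condition $S(0)=\tilde S(0)=I$ at the basepoint---captures the content of that adaptation.
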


\begin{remark}
\label{rem:cone}
A more precise statement of Theorem \ref{thm:Scompare1} localizes at a choice of  unit geodesics $\sigma:[0,a]\to M$ and $\tilde\sigma:[0,a]\to\tilde M$, where $\sigma$ and $\tilde\sigma$ have no conjugate points.  Specifically, we let
$U\subset M$ and $\tilde U\subset\tilde M$  be  
diffeomorphic images under $\exp_q$ and $\exp_{\tilde q}$  of 
truncated cones of the form 
$(0,a]\times_{\id}D$ and $(0,a]\times_{\id}\tilde D$
with vertices at the origin, 
where $D$  and $\tilde D$ are open disks in the unit tangent ``spheres'' at $q$ and $\tilde q$ centered at $\sigma'(0)$ and $\tilde\sigma'(0)$ respectively. 
\end{remark}

%

The following basic lemma is verified in \cite{ab-lorentz}:

\begin{lem}
\label{lem:constant}
Let $M$ be a semi-Riemannian space of constant curvature $K$,  and $U$ be the diffeomorphic image under $\exp_q$ of a  star-shaped region in $T_qM$ about $O$.   Then $f\,_{K,q}:U\to\R$ satisfies
$$
\hess f\,_{K,q} = (1-Kf_{K,q})\,g.
$$
\end{lem}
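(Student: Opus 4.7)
My plan is to verify the identity in the equivalent form
$$(f\circ\gamma)''(t) \;=\; g\bigl(\gamma'(t),\gamma'(t)\bigr)\bigl(1-Kf(\gamma(t))\bigr)$$
along every geodesic $\gamma$ in $U$, handling radial geodesics from $q$ by direct power-series manipulation and general geodesics through the warped-product structure that $\exp_q$ induces in a space of constant curvature.

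\emph{Radial case.} For $\gamma(t)=\exp_q(tv)$, both $g(\gamma',\gamma')=g_q(v,v)=:a$ and $E_q(\gamma(t))=at^2$ are immediate from (\ref{eq:E}). Substituting $E_q=at^2$ into (\ref{eq:modE}) and differentiating the resulting power series twice term by term yields
$$(f\circ\gamma)''(t) \;=\; a\sum_{m\ge 0}\frac{(-K)^m(at^2)^m}{(2m)!} \;=\; a\bigl(1-Kf(\gamma(t))\bigr),$$
which is exactly the required identity for the radial geodesic $\gamma$.

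\emph{Warped-product extension.} On the open dense subset $U^\ast := \{p\in U : E_q(p)\ne 0\}$, the constant-curvature hypothesis places the metric into warped-product polar form around $q$: in each of the spacelike and timelike sectors, $g = \epsilon\,dr\otimes dr + \mu(r)^2 h_\epsilon$, where $\epsilon = \pm 1$, $r = \sqrt{|E_q|}$, $h_\epsilon$ is the induced metric on the unit pseudo-sphere in $T_qM$, and $\mu$ is the canonical Jacobi amplitude along a radial geodesic, solving $\mu''+\epsilon K\mu = 0$ with $\mu(0)=0$, $\mu'(0)=1$. Writing $f = \psi(r)$ and comparing the series (\ref{eq:modE}) with this ODE gives $\psi'(r) = \epsilon\mu(r)$ and $\mu'(r) = 1 - K\psi(r)$. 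Plugging into the standard warped-product Hessian formula for a radial function yields, in each sector,
$$\hess f \;=\; \psi''(r)\,dr\otimes dr \;+\; \epsilon\,\frac{\psi'(r)\mu'(r)}{\mu(r)}\,\mu(r)^2 h_\epsilon \;=\; (1-K\psi)\,g,$$
which is the lemma on $U^\ast$. Both $\hess f$ and $(1-Kf)g$ are smooth tensor fields on all of $U$, since $E_q$ is the smooth pullback of the quadratic form $g_q$ under $\exp_q^{-1}$ and $f$ is entire in $E_q$; therefore the identity extends across the null cone by continuity.

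The main obstacle is the signature-dependent bookkeeping in the extension step: the sector sign $\epsilon$, the relation $\psi' = \epsilon \mu$, and the ODE identity $\mu' = 1-K\psi$ must line up so that the superficially different spacelike and timelike warped-product Hessians collapse to the single formula $(1-Kf)g$. Once these relations are pinned down, the identity is essentially forced by the common linear ODE satisfied by $\mu$ and $\psi$ with matching initial data.
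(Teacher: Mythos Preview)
Your argument is correct. The radial computation via the power series is clean, and the warped-product step is valid: in a space of constant curvature $K$, the Gauss lemma together with the fact that transverse Jacobi fields along a unit radial geodesic are all of the form $\mu(r)E$ with $E$ parallel does put the metric into the form $\epsilon\,dr^2+\mu(r)^2 h_\epsilon$ on each causal sector, and your identities $\psi'=\epsilon\mu$, $\mu'=1-K\psi$ then force $\psi''=\epsilon(1-K\psi)$ and $\epsilon\psi'\mu'/\mu=1-K\psi$, giving $\hess f=(1-Kf)g$ on $U^\ast$; continuity across the null cone is legitimate since both sides are smooth on all of $U$.

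As for comparison: the present paper does not actually prove this lemma---it is quoted from \cite{ab-lorentz} (``The following basic lemma is verified in \cite{ab-lorentz}''), so there is no in-paper proof to compare against. Your direct verification via polar warped-product coordinates is the standard route and is almost certainly what \cite{ab-lorentz} does as well; the only alternative one sometimes sees is to bypass the warped-product formalism and instead compute $\hess f$ along an arbitrary geodesic using Jacobi fields directly, but that amounts to the same calculation unpacked.

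One cosmetic point: in your displayed Hessian formula you jump from the warped-product expression straight to $(1-K\psi)g$ without recording that $\psi''=\epsilon\mu'=\epsilon(1-K\psi)$, which is needed for the $dr\otimes dr$ component to match $\epsilon(1-K\psi)\,dr\otimes dr$. It is implicit in your relations, but worth writing out so the reader sees both tensor components collapse.
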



Combining Theorem \ref{thm:Scompare1} and Lemma \ref{lem:constant}, we obtain: 

\begin{thm}\cite{ab-lorentz}
\label{thm:ncp-compare}
Let $M$ be a semi-Riemannian manifold satisfying $\mathcal{R}\le K$.  Let $U$ be the diffeomorphic image under $\exp_q$ of a  star-shaped region in $T_qM$ about $O$. Assume $\,E_q:U\to\R\,$ satisfies 
$\,E_q < \pi^2/K\,$ if $\,K> 0$, and 
$\,E_q > \pi^2/K\,$ if $\,K<0$.
Then $f\,_{K,q}:U\to\R$ satisfies 
$$
\hess f\,_{K,q} \ge (1-Kf_{K,q})\,g.
$$ 
That is, $f_{K,q}$ is $\,(1-Kf_{K,q})$-convex.
\end{thm}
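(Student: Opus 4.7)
The plan is to apply the comparison Theorem \ref{thm:Scompare1} with the comparison space $\tilde M$ taken to be a semi-Riemannian space of constant curvature $K$ of the same dimension and index as $M$, and then to evaluate the right-hand side of the comparison using Lemma \ref{lem:constant}.

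First I would fix $\tilde q\in\tilde M$ and an isometry $T_qM\to T_{\tilde q}\tilde M$. The hypothesis on $E_q$ is exactly the condition that the image of the lift of $U$ to $T_qM$, transported to $T_{\tilde q}\tilde M$, lies inside the star-shaped domain on which $\exp_{\tilde q}$ is a diffeomorphism onto a set $\tilde U$ free of conjugate points in the model $\tilde M$ (the bounds $\pi^2/K$ and $\pi^2/K$ for $K>0$ and $K<0$ correspond respectively to the first spacelike and timelike conjugate radii in the constant-curvature model). For every non-null geodesic $\sigma$ radiating from $q$ in $U$, let $\tilde\sigma$ be the corresponding geodesic from $\tilde q$; by construction $g(\sigma',\sigma')=\tilde g(\tilde\sigma',\tilde\sigma')$ and $\tilde\sigma\subset\tilde U$.

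Next I would verify the operator comparison $R_{\sigma'}\le\tilde R_{\tilde\sigma'}$. Using the equivalent formulation of $\mathcal{R}\le K$ stated in the introduction, for any $v$ parallel along $\sigma$,
\begin{equation*}
g(R(\sigma',v)\sigma',v)\,\le\,K\bigl(g(\sigma',\sigma')g(v,v)-g(\sigma',v)^2\bigr),
\end{equation*}
while for the constant-curvature model $\tilde R(\tilde\sigma',v)\tilde\sigma'=K\bigl(g(v,\tilde\sigma')\tilde\sigma'-g(\tilde\sigma',\tilde\sigma')v\bigr)$ gives equality. Under the parallel-translation identification this is exactly the self-adjoint operator inequality required by Theorem \ref{thm:Scompare1}. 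That theorem therefore yields $S_{K,q}\ge\tilde S_{K,q}$ at corresponding points of $\sigma$ and $\tilde\sigma$. By Lemma \ref{lem:constant}, $\hess\tilde f_{K,q}=(1-K\tilde f_{K,q})\tilde g$, so $\tilde S_{K,q}=(1-K\tilde f_{K,q})\,\id$. Because $f_{K,q}$ and $\tilde f_{K,q}$ depend only on $E_q$ and the affine parameter (they agree on corresponding geodesics via \eqref{eq:modE}), the values match under the identification, giving $S_{K,q}\ge (1-Kf_{K,q})\,\id$, which translates back to $\hess f_{K,q}\ge(1-Kf_{K,q})\,g$ along every non-null geodesic through $q$. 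Since the non-null directions are dense in $T_qM$ and both sides are smooth (the modified shape operator being smoothly defined on the regular set of $E_q$, including across null directions), the inequality extends by continuity to all of $U$.

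The main obstacle I anticipate is the first verification step: confirming that the pointwise curvature bound $\mathcal{R}\le K$ translates into precisely the self-adjoint operator comparison $R_{\sigma'}\le\tilde R_{\tilde\sigma'}$ required as input to Theorem \ref{thm:Scompare1}. In the semi-Riemannian setting the quadratic form comparison is delicate because $g$ is indefinite, so one must check that the identification by parallel translation and the choice of the constant-curvature model make the inequality of self-adjoint operators hold in the sense used by Andersson and Howard, rather than merely the sectional curvature inequality. Once this is pinned down, the remainder is a direct chaining of Theorem \ref{thm:Scompare1}, Lemma \ref{lem:constant}, and the observation that null directions pose no difficulty because $f_{K,q}$ and $S_{K,q}$ are smooth across the light cone of~$q$.
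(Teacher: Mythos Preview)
Your proposal is correct and is exactly the paper's approach: the paper states the theorem immediately after the words ``Combining Theorem \ref{thm:Scompare1} and Lemma \ref{lem:constant}, we obtain,'' and gives no further argument. Your added detail---interpreting the $E_q$ bounds as the conjugate-point constraint in the constant-curvature model, reading $\mathcal{R}\le K$ as the operator inequality $R_{\sigma'}\le\tilde R_{\tilde\sigma'}$, and extending across null directions by continuity of the modified shape operator---fills in precisely what the paper leaves implicit.
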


\subsection{Geometric characterization of $\,\mathcal{R}\le K$,\, $\mathcal{R}\ge K$}
\label{subsec:characterize} 
The geometric characterization of Riemannian  sectional curvature bounds $\,\Sec\le K$ or $\,\Sec\ge K$ is given by local triangle comparisons with  Riemannian space forms of constant curvature $K$.  This is the basis of Alexandrov geometry, which extends the theory of Riemannian manifolds with sectional curvature bounds to highly singular spaces.

It turns out that this characterization by local triangle comparisons extends to semi-Riemannian manifolds \emph{if we take lengths of geodesics to be signed}. 

Recall that in a semi-Riemannian manifold, any point $q$ has arbitrarily small 
normal  neighborhoods $U$,  that is, $U$ is the diffeomorphic exponential image of a star-shaped 
domain in the tangent space of each of its points. There is a unique
geodesic $\gamma_{p,q}$ in $U$ between any two points  $p,q\in U$. 


\begin{thm}[\cite{ab-lorentz}]
\label{thm:ab-comparison1}
Let $M$ be a semi-Riemannian manifold.
\begin{enumerate}
\item
\label{geometric}
 If $M$ satisfies $R \le K \,\,(R\ge K)$, and
$U$ is a normal neighborhood for $K$, then the signed length of
the geodesic
between two points on any geodesic triangle of  $U$ is at most \,(at least)\,
that for the corresponding points on a \emph{model} triangle with the same signed sidelengths in a semi-Riemannian model surface $M_K$ with constant sectional curvature $K$. (For a nondegenerate triangle, $M_K$ is uniquely determined, as is the comparison model triangle up to motion.)

\item
Conversely, if these triangle comparisons hold in some normal neighborhood
of each point of $M$, then $R \le K \,\,(R\ge K)$.
\end{enumerate}
\end{thm}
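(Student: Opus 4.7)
The plan is to prove part~(1) via a Sturm-type comparison of $f_{K,q}$ with its model counterpart along each triangle side, leveraging the Hessian inequality of Theorem~\ref{thm:ncp-compare}, and to prove the converse~(2) via a Taylor expansion of signed squared distance at a single point.

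For part~(1) under $\mathcal{R}\le K$, fix a triangle with vertices $p,q,r$ in the normal neighborhood $U$ and its model counterpart $\tilde p,\tilde q,\tilde r$ in $M_K$ with matching signed side lengths. Parametrize the side from $p$ to $r$ as $\gamma:[0,L]\to U$ at constant speed $c=g(\gamma',\gamma')$, and similarly $\tilde\gamma$ in the model (with the same $L$ and $c$). Setting $\phi(t)=f_{K,q}(\gamma(t))$ and $\tilde\phi(t)=\tilde f_{K,\tilde q}(\tilde\gamma(t))$, Theorem~\ref{thm:ncp-compare} gives
\[
\phi''(t)\ge (1-K\phi(t))\,c,
\]
while Lemma~\ref{lem:constant} yields equality for $\tilde\phi$. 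Subtracting,
\[
(\phi-\tilde\phi)''+Kc\,(\phi-\tilde\phi)\ge 0.
\]
The boundary values coincide, since $f_{K,q}(p)$ and $f_{K,q}(r)$ are each a fixed function $F$ (from (\ref{eq:modE})) of the signed squared length of the corresponding $q$-side, which matches the model by construction. The hypothesis ``normal neighborhood for $K$'' packages exactly the condition required for a Sturm maximum principle, namely that $L$ is less than the first zero of any solution of $y''+Kcy=0$ with $y(0)=0$. This forces $\phi\le\tilde\phi$ on $[0,L]$. Since $F$ is strictly increasing in $E_q$ (check directly from (\ref{eq:modE})) and $E_q$ is strictly increasing in signed length, the conclusion translates into the claimed signed-length comparison. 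The case $\mathcal{R}\ge K$ is symmetric, and running the argument with each of $p,q,r$ in the role of basepoint covers every pair of points on the triangle.

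For the converse (2), fix $p\in M$ and a nondegenerate 2-plane $\Pi\subset T_pM$ spanned by $u,v$. Form a one-parameter family of small triangles at $p$ with sides along $\epsilon u$ and $\epsilon v$. A normal-coordinate Taylor expansion of signed squared distance contains, as its only curvature-dependent term at order $\epsilon^4$, the quantity $g(R(u,v)u,v)$ in $M$ and $K\bigl(g(u,u)g(v,v)-g(u,v)^2\bigr)$ in $M_K$. Feeding the triangle comparison inequality into this expansion and letting $\epsilon\to 0$ yields the defining inequality of $\mathcal{R}\le K$ on $\Pi$; the case $\mathcal{R}\ge K$ is identical with reversed inequalities.

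The main obstacle is uniform sign management across the light cone. The product $Kc$ may take either sign depending on the causal type of the side, so the Sturm step and the monotonicity-of-$F$ step must be verified in each causal case, and the precise formulation of ``normal neighborhood for $K$'' has to be tuned to work simultaneously for all of them. For the converse, the subtlety is that a single scalar inequality must extract the correct curvature bound for both spacelike and timelike 2-planes; this works automatically because the area term $g(u,u)g(v,v)-g(u,v)^2$ is positive on spacelike planes and negative on timelike ones, so the same inequality $g(R(u,v)u,v)\le K\bigl(g(u,u)g(v,v)-g(u,v)^2\bigr)$ specializes to spacelike sectional curvature $\le K$ and to timelike sectional curvature $\ge K$, exactly as required by the definition of $\mathcal{R}\le K$.
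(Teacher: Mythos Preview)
The paper does not actually prove this theorem; it is quoted from \cite{ab-lorentz} without argument. Your outline is nonetheless the strategy of that source: the forward direction comes from the Hessian inequality for $f_{K,q}$ (Theorem~\ref{thm:ncp-compare}) against the model identity (Lemma~\ref{lem:constant}), reduced to a one-variable Sturm/maximum-principle comparison along a side, and the converse is read off from the fourth-order term in the small-triangle expansion of signed energy. So the overall approach is sound and matches the intended one.

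There is, however, a real gap in your reduction. Running the Hessian comparison with each vertex as basepoint yields only the \emph{point--side} inequality (a vertex versus a point on the opposite side), not the comparison for an arbitrary pair of points on the triangle that the statement asserts; your sentence ``running the argument with each of $p,q,r$ in the role of basepoint covers every pair of points'' is simply false as stated. Bridging this requires a semi-Riemannian Alexandrov lemma: given $a\in[q,p]$ and $b\in[p,r]$, one first compares in $\triangle pqr$ to control the signed length of $qb$, then in the sub-triangle with vertices $q,p,b$ to control $ab$, checking at each stage that the model triangle varies monotonically in the signed sense when one signed side length is decreased. That monotonicity is not automatic for indefinite metrics and is part of what is established in \cite{ab-lorentz}. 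A second point you leave implicit is the \emph{existence} of a model triangle in some $M_K$ realizing three prescribed signed side lengths; in the semi-Riemannian case this realizability, together with the resulting determination of the signature of $M_K$, is nontrivial and is part of what the hypothesis ``normal neighborhood for $K$'' encodes.
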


\begin{remark} In \cite{harris} (see also \cite{harris-app}), Harris proves \emph{global} purely timelike triangle comparisons in space-times of timelike sectional curvature bounded above.  Thus the theorem of Harris is  a timelike version for Lorentzian manifolds of Toponogov's Globalization Theorem for Riemannian manifolds of sectional curvature bounded below \cite{toponogov}. 
\end{remark}
 
%

\subsection{Comparisons for parallel families of hypersurfaces}
\label{subsec:ah}
In \cite[Theorem 3.2]{ah}, Andersson and Howard prove a comparison theorem for matrix Riccati equations that applies to the second fundamental forms of parallel families of hypersurfaces of any signature in semi-Riemannian manifolds, rather than only to parallel families of spacelike hypersurfaces in Lorentzian manifolds as in Section \ref{sec:lor-dist}.  We give an analogue in Theorem \ref {thm:Scompare1}. 
  
For $\mathcal{R}\le 0$ and $\mathcal{R}\ge 0$, Andersson and Howard prove ``gap'' rigidity theorems  of the type first proved for Riemannian manifolds with $\Sec\le 0$ by Gromov \cite{bgs}, and with $\Sec\ge 0$ by Greene and Wu \cite{gw}, respectively.  As applications, they obtain  rigidity results for semi-Riemannian manifolds with simply connected ends of constant curvature.
 
We remark that while in the Riemannian case,  the Ricatti  comparisons of  \cite{ah} reduce to 1-dimensional equations (see
\cite{Kr}) the semi-Riemannian case seems to require matrix-valued  
equations.  Such increased complexity is perhaps not surprising, since semi-Riemannian curvature bounds
above (say) share some behavior with 
Riemannian  curvature bounds below as well as above.

\section{Results}
By Theorem \ref{thm:ncp-compare} we have:
\begin{cor}\label{cor:lam-convex}
Let $M$ be a semi-Riemannian manifold satisfying $\mathcal{R}\le K$.  Let $U$ be the diffeomorphic image under $\exp_q$ of a  star-shaped region in $T_qM$ about $O$. Assume $\,E_q:U\to\R\,$ satisfies
$\,E_q < \pi^2/4K\,$ if $\,K> 0$, and 
$\,E_q > \pi^2/4K\,$ if $\,K<0$.  Then $f\,_{K,q}:U\to\R$ is $\,\lambda$-convex with 
 $\lambda = 1-Kf_{K,q}>0$
 (where $f\,_{K,q}$ is defined in  (\ref{eq:E})  and (\ref{eq:modE})) .

Moreover, $f\,_{K,q}$ is space-time convex on a neighborhood of  $q$.
\end{cor}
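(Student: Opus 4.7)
The first assertion is essentially bookkeeping on top of Theorem \ref{thm:ncp-compare}. One checks that $\pi^2/(4K)<\pi^2/K$ when $K>0$, with the inequality reversed when $K<0$, so the present hypothesis strictly strengthens that of Theorem \ref{thm:ncp-compare}; consequently $\hess f_{K,q}\ge (1-Kf_{K,q})\,g$ already holds on $U$. It remains to verify that $\lambda:=1-Kf_{K,q}$ is strictly positive. By Remark \ref{rem:mod-shape}, $\lambda$ takes only the values $1$ (where $KE_q=0$), $\cos\sqrt{KE_q}$ (where $KE_q>0$), or $\cosh\sqrt{-KE_q}$ (where $KE_q<0$); the first and third are $\ge 1$, while the second is positive precisely when $KE_q<\pi^2/4$. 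When $K>0$, having $KE_q>0$ forces $E_q>0$, and the hypothesis $E_q<\pi^2/(4K)$ yields $KE_q<\pi^2/4$; the case $K<0$ is symmetric. Hence $\lambda>0$ throughout $U$, proving the first assertion.

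For the ``moreover'' clause, by Definition \ref{def:lambda-convex1} I must additionally exhibit $\hess f_{K,q}$ as having Lorentzian signature near $q$. Since nondegeneracy and signature are open conditions on symmetric bilinear forms, it suffices to prove $\hess f_{K,q}(q)=g(q)$. From the expansion $f_{K,q}=E_q/2+O(E_q^2)$ and the identity $E_q\circ\exp_q(v)=g_q(v,v)$ on $T_qM$, one sees that $df_{K,q}(q)=0$, so $q$ is a critical point of $f_{K,q}$. The Hessian at a critical point is connection-independent, hence coincides with the ordinary second derivative of the pulled-back function in normal coordinates, which is $g_q$. Together with the $\lambda$-convexity ($\lambda>0$) established above, this realizes $f_{K,q}$ as space-time convex on an open neighborhood of $q$ inside $U$.

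The only mildly subtle step is the Hessian identification at $q$. One cannot read it off from the inequality $\hess f_{K,q}\ge (1-Kf_{K,q})\,g$ alone, which at $q$ only yields $\hess f_{K,q}(q)\ge g(q)$ and leaves both the signature and the possibility of strict inequality open. Invoking the critical-point observation bypasses this, since at $q$ one may compute the Hessian using the flat connection pulled back from $T_qM$, under which the result is immediate.
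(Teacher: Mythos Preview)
Your argument is correct. For the $\lambda$-convexity assertion it tracks the paper's proof closely: both invoke Theorem~\ref{thm:ncp-compare} (after noting the present hypotheses are the stronger ones) and then verify $1-Kf_{K,q}>0$ via the case analysis implicit in Remark~\ref{rem:mod-shape} and equation~\eqref{eq:modE}. For the ``moreover'' clause the two diverge slightly. The paper simply checks that $(f_{K,q}\circ\gamma)''(0)=-1$ for a unit timelike geodesic $\gamma$ through $q$; combined with the spacelike positivity already supplied by $\lambda$-convexity (with $\lambda(q)=1$), this forces Lorentzian signature of $\hess f_{K,q}$ at $q$, and continuity finishes. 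You instead establish the sharper identity $\hess f_{K,q}(q)=g(q)$ via the critical-point observation and connection-independence of the Hessian there. Your route delivers the full Hessian at $q$ in one stroke (and in particular recovers the paper's $-1$ along timelike directions), while the paper's computation is more hands-on but avoids invoking the connection-independence fact. Either way the concluding continuity step is identical.
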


\begin{proof}
By Theorem \ref{thm:ncp-compare}, $f\,_{K,q}:U\to\R$ is $\,(1-Kf_{K,q})$-convex. 
By (\ref{eq:modE}), setting $\lambda = 1-Kf_{K,q}$, we have
\begin{equation}\label{eq:m1-f}
\lambda\ =\ \begin{cases}
1, & K=0,\\
\cos\sqrt{KE_q} , & K\ne 0.
\end{cases}
\end{equation}

Suppose $K>0$.   If $E_q\le 0$, then $\lambda=\cosh\sqrt{|KE_q|} >0$.  If $\,0\le E_q < \pi^2/4K\,$, then 
$\lambda=\cos\sqrt{|KE_q|} >0\,$.  
Similarly for $K<0$.  

It remains to show $\hess f_{K,q}$ has Lorentzian signature in a neighborhood of $q$. This follows by continuity, since for a unit timelike geodesic $\gamma$ satisfying $\gamma(0)=q$ we have  
 $(f_{K,q}\circ\gamma)''(0)=-1$.
\end{proof}




In defining the second fundamental form $\II$  and mean curvature vector field $H$ 
of a $k$-dimensional submanifold $\Sigma$ of a Lorentzian manifold $M$, we use the convention in relativity (the opposite of that in differential geometry):
\begin{equation}
\label{eq:II-def}
\overline{\nabla}_X Y = \nabla_X Y - \II(X,Y),
\end{equation}
\begin{equation}
\label{eq:mean-curv}
H = \frac{1}{k} \sum_{i} \II(E_i,E_i),
\end{equation}
where $\overline{\nabla}$ and $\nabla$ denote the covariant derivatives on $M$ and $\Sigma$ respectively, and $\{ E_1, ... , E_k \}$ is a local orthonormal frame on $\Sigma$.
 
We are going to follow \cite{abl} in considering submanifolds $\Sigma$  satisfying the \emph{weak maximum principle} of Pigola, Rigoli and Setti \cite{prs1}, according to which for any smooth function $u$ on $\Sigma$ with $u^* = \sup_{\Sigma} u < +\infty$, there exists a sequence of points $p_n \in \Sigma$ such that $$
u(p_n) > u^* - \frac{1}{n} \quad \text{and} \quad \Delta u(p_n) < \frac{1}{n}.
$$ 
Pigola, Rigoli and Setti proved that $\Sigma$ satisfies the weak maximum principle if and only if $\Sigma$ has the probabilistic property of stochastic completeness \cite{prs1,prs2}.

 By \cite[Proposition 8]{gi}, domains carrying space-time convex functions $f$ cannot contain closed marginally inner and outer trapped surfaces.   
The proof extends to  the following proposition,
which does not depend on the behavior of $\hess f$ on causal vectors or on the codimension, and uses the weak maximum principal  to extend from closed to stochastically complete submanifolds.

\begin{thm}
\label{thm:miots local} 
Let $M$ be a Lorentzian manifold and $f : M \to \R$ be 
$\lambda$-convex on spacelike vectors
for some function $\lambda: M \to \R$. 
Then:
\begin{enumerate}
\item[(i)]
$M$ contains no stochastically complete spacelike submanifold with vanishing mean curvature 
and on which $f$ is bounded above 
and 
$\lambda$ has positive infimum.
\item[(ii)]
If $\lambda >0$, 
then  $M$ contains no closed spacelike submanifold with vanishing mean curvature.
\end{enumerate}
\end{thm}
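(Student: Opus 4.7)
The plan is to restrict $f$ to a putative submanifold $\Sigma$ and compute the Laplacian of the restriction using the second fundamental form, then play $\lambda$-convexity on spacelike vectors against the vanishing mean curvature hypothesis and the (weak) maximum principle.

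First I would set $g = f|_\Sigma$ and compute, at an arbitrary $p\in\Sigma$ using a local orthonormal frame $\{E_1,\ldots,E_k\}$ on $\Sigma$,
\[
\Delta g = \sum_{i=1}^k \bigl(E_i(E_i f) - (\nabla_{E_i}E_i)f\bigr).
\]
Expanding $E_i(E_if) = \hess f(E_i,E_i) + df(\overline{\nabla}_{E_i}E_i)$ and using the convention (\ref{eq:II-def}) to rewrite $\overline{\nabla}_{E_i}E_i = \nabla_{E_i}E_i - \II(E_i,E_i)$, the intrinsic connection terms cancel and I obtain the standard identity
\[
\Delta g \;=\; \sum_{i=1}^k \hess f(E_i,E_i) \;-\; k\,df(H).
\]
Since $\Sigma$ is spacelike, each $E_i$ is a unit spacelike vector, so the hypothesis of $\lambda$-convexity on spacelike vectors gives $\hess f(E_i,E_i)\ge \lambda\, g_M(E_i,E_i) = \lambda$. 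Combined with $H\equiv 0$, this yields the pointwise estimate
\[
\Delta g \;\ge\; k\,\lambda.
\]

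For (i), assume for contradiction that such a $\Sigma$ exists with $g^* = \sup_\Sigma g < +\infty$ and $\lambda_0 := \inf_\Sigma \lambda > 0$. Stochastic completeness means $\Sigma$ satisfies the weak maximum principle of Pigola--Rigoli--Setti, so there is a sequence $p_n\in\Sigma$ with $\Delta g(p_n) < 1/n$. But the displayed estimate forces $\Delta g(p_n)\ge k\lambda_0>0$, and taking $n$ large enough gives the desired contradiction.

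For (ii), a closed (compact, boundaryless) submanifold is automatically stochastically complete; moreover, $g$ attains a finite maximum on $\Sigma$ and, by continuity, $\lambda$ attains a positive minimum on $\Sigma$. Thus (ii) reduces to (i). (Alternatively, one can invoke the ordinary strong maximum principle at the point of maximum of $g$ to get $\Delta g \le 0$, contradicting $\Delta g \ge k\lambda > 0$ directly.) The main conceptual point — and the only place where any care is needed — is the cancellation in the Laplacian formula that isolates the term $-k\,df(H)$; the rest is a straightforward application of the weak maximum principle, exactly as in \cite{abl,gi}, with the observation that spacelikeness of $\Sigma$ is precisely what allows us to use the hypothesis on $\hess f$ even though it is assumed only on spacelike vectors.
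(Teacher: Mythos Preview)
Your argument is correct and follows essentially the same route as the paper: you derive the identity $\Delta(f|_\Sigma)=\sum_i \hess f(E_i,E_i)-k\,df(H)$, use spacelikeness of the frame together with $\lambda$-convexity on spacelike vectors and $H=0$ to get $\Delta(f|_\Sigma)\ge k\lambda$, and then invoke the weak maximum principle. Your reduction of (ii) to (i) via compactness is also what the paper does.
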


\begin{proof}
Suppose $\Sigma$ is a spacelike $k$-dimensional submanifold with vanishing mean curvature. 
Let $\overline{\nabla}$ and $\nabla$ denote the covariant derivatives on $M$ and $\Sigma$ respectively. Let $\II$ and $H$ denote the  second fundamental form and mean curvature vector field of $\Sigma$ respectively. Let $u = f|_{\Sigma} : \Sigma \to \R$ denote the restriction of $f$ to $\Sigma$.

Then for any $x \in T_p \Sigma$, 
$$
\label{eq:Hessian-of-restriction}
(\nabla^2 u)_p (x,x) = (\overline{\nabla}^2 f)_p(x,x) - g( \II_p(x,x), \overline{\nabla} f_p ).
$$
 If $\{ e_i \}$ is an orthonormal basis for $T_p \Sigma$, then 
\begin{equation}
\label{eq:laplacian-of-restriction}
\Delta u (p) = \sum_{i = 1}^{k} (\overline{\nabla}^2 f)_p(e_i,e_i) - k \, g(H_p, {\overline{\nabla} f}_p).
\end{equation} Since $f$ is $\lambda$-convex and $H$ vanishes, $u$ satisfies
$$
\label{eq:laplacian-of-restriction1}
\Delta u \geq k \, \lambda|_\Sigma.
$$

Thus if the Laplacian $\Delta u$ is bounded below by $ k \, \inf_{\Sigma} \lambda > 0 $, 
and $u$ is bounded above, then $\Sigma$ cannot be stochastically complete.  
This proves (i), and (ii) follows.
\end{proof}

\begin{definition}
In a causally orientable Lorentzian manifold, a spacelike submanifold $M$ whose mean curvature vector field is causal and future-pointing is called  a \emph{weakly future-trapped submanifold}.
\end{definition}

\begin{remark}
Galloway and Senovilla prove that standard singularity theorems hold in Lorentzian manifolds of arbitrary dimension with closed trapped 
submanifolds of arbitrary co-dimension  \cite{gs10}. They point out
that such submanifolds appear to have many common properties independent of the codimension.
\end{remark}

%
%
%
%
%
%
%
%

The significance of the following theorem lies in using sectional curvature bounds to examine geometric properties of a full neighborhood of a point $\,q$, rather than restricting 
to the
chronological
 future of $q$.

If in the following theorem we restrict $U$ and $\tilde U$ to the 
chronological
future of $q$ and assume only timelike sectional curvature $\ge K$, then taking into account Remark \ref{rem:cone}, we obtain 
a result of Al\'ias, Bessa and deLira  
(\cite[Corollary 4.2]{abl}).

\begin{thm}
\label{thm:trapped-SC}
Let $M$ be a Lorentzian manifold satisfying $\mathcal{R}\le K$.  Let $U$ be a domain in $M$ that is the diffeomorphic image under $\exp_q$ of a  star-shaped region in $T_qM$ about $O$. Suppose that $\,E_q:U\to\R\,$ is bounded above and satisfies $\,E_q < \pi^2/4K\,$ if $K > 0$ and $\,E_q > \pi^2/4K\,$ if $K < 0$.
\begin{enumerate}
\item[(i)]
Then $U$ contains no stochastically complete spacelike submanifolds $\Sigma$ with vanishing mean curvature, and such that 
$\,\sup E_q|_\Sigma < \pi^2/4K\,$ if $\,K> 0$ and $\inf E_q|_\Sigma > \pi^2/4K\,$ if $\,K<0$.
\item[(ii)]
More generally, $U$ contains no stochastically complete, weakly future-trapped submanifold whose mean curvature vector field $H$ satisfies 
\begin{equation}\label{eq:HE_q}
H E_q \leq 0,
\end{equation}
and such that 
$\,\sup E_q|_\Sigma < \pi^2/4K\,$ if $\,K> 0$ and $\inf E_q|_\Sigma > \pi^2/4K\,$ if $\,K<0$.

\item[(iii)]
Suppose 
$K\ne 0$ and 
$U\subset \tilde{U}$,  where $\tilde{U}$ is the diffeomorphic image under $\exp_q$ of a  star-shaped region in $T_qM$ about $O$, and $\,E_q:\tilde{U}\to\R\,$  satisfies $\,E_q < \pi^2/K\,$ if $\,K> 0$ and 
$\,E_q > \pi^2/K\,$if $\,K<0$.
Then no stochastically complete, weakly future-trapped submanifold in $\tilde{U}$ that satisfies $H E_q \leq 0$  enters $U$.
\end{enumerate}
\end{thm}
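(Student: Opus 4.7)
The plan is to handle the three parts in order, with parts (i) and (ii) reducing cleanly to the weak-maximum-principle machinery of Theorem~\ref{thm:miots local}, while part (iii) requires a more delicate argument.

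For part (i), I would apply Theorem~\ref{thm:miots local}(i) directly with $f = f_{K,q}$. By Corollary~\ref{cor:lam-convex}, $f_{K,q}$ is $\lambda$-convex on $U$ with $\lambda = 1 - K f_{K,q} = \cos\sqrt{KE_q}$ (where imaginary argument gives $\cosh$). I need to verify on $\Sigma$ that (a) $f_{K,q}$ is bounded above and (b) $\lambda$ has positive infimum. For $K>0$, the hypothesis $\sup_{\Sigma} E_q < \pi^2/(4K)$ yields $\inf_{\Sigma}\lambda \ge \cos\sqrt{K\sup_\Sigma E_q} > 0$; moreover, as $f_{K,q}$ is increasing in $E_q$, $\sup_\Sigma f_{K,q}$ is also finite. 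The case $K<0$ is analogous with the roles of $\sup$ and $\inf$ swapped, and the case $K=0$ is immediate. The theorem then applies and rules out such $\Sigma$.

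For part (ii), the plan is to revisit the proof of Theorem~\ref{thm:miots local} but without the assumption $H = 0$. The Laplacian computation (\ref{eq:laplacian-of-restriction}) now leaves the term $-k\,g(H,\overline{\nabla}f_{K,q})$ intact. The key observation is that $f_{K,q} = \phi(E_q)$ with $\phi'(s) = \sin\sqrt{Ks}/(2\sqrt{Ks})$, and this derivative is strictly positive on all of $\tilde U$ (it equals $\sinh(t)/(2t) > 0$ when $Ks=-t^2<0$, and is positive for $0<\sqrt{Ks}<\pi$). Hence $\overline{\nabla} f_{K,q} = \phi'(E_q)\overline{\nabla}E_q$ and, using $HE_q\le0$ and $\phi'(E_q)>0$,
\[
-k\,g(H,\overline{\nabla}f_{K,q}) = -k\phi'(E_q)\,HE_q \ge 0.
\]
Combined with the spacelike $\lambda$-convexity, this gives $\Delta u \ge k\lambda$ on $\Sigma$, and the weak maximum principle finishes the argument exactly as in part (i).

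For part (iii), the main obstacle is that on $\tilde U$ the coefficient $\lambda = \cos\sqrt{KE_q}$ changes sign, so the mere $\lambda$-convexity of $f_{K,q}$ on $\tilde U$ (from Theorem~\ref{thm:ncp-compare}) is not enough to reproduce (ii). My plan is still to apply the weak maximum principle to $u = f_{K,q}|_\Sigma$: the bound $E_q < \pi^2/K$ on $\tilde U$ forces $u < 2/K$ (for $K>0$), hence $u^* := \sup_\Sigma u$ is finite, and the inequality $\Delta u \ge k(1-Ku)$ from part (ii) carries over verbatim. Passing to the limit along the weak-max sequence yields $k(1-Ku^*)\le 0$, i.e.\ $\sup_\Sigma E_q \ge \pi^2/(4K)$. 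The expected route to a contradiction with the hypothesis that $\Sigma$ enters $U$ is then to apply the weak maximum principle a second time, to an auxiliary function $h(f_{K,q})|_\Sigma$ chosen so that its sup is detected precisely on $\partial U = \{E_q = \pi^2/(4K)\}$ (for example, a monotone modification that blows up at that level set, modeled on the barrier used by Al\'ias--Bessa--de~Lira \cite{abl}); the extra $h''(f)\,df\otimes df$ term must compensate for the sign change of $\lambda$ on $\tilde U\setminus U$. Finding such an $h$ in the indefinite-signature setting, where $|\overline{\nabla}f_{K,q}|^2$ is not sign-definite, is the technical heart of the proof, and is where I expect the argument to be most delicate.
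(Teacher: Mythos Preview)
Your treatment of (i) and (ii) is exactly the paper's: both derive $\Delta u \ge k(1-Kf_{K,q}) - k\,g(H,\overline{\nabla}f_{K,q})$, use $\overline{\nabla}f_{K,q}=\phi'(E_q)\,\overline{\nabla}E_q$ with $\phi'\ge 0$ on the stated range to drop the mean-curvature term, and finish with the weak maximum principle.

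For (iii) the paper's route is far simpler than the barrier construction you sketch.  The paper does not introduce any auxiliary $h$; it simply observes that the same inequality $\Delta u \ge k(1-Ku)$ persists on all of $\tilde U$ (Theorem~\ref{thm:ncp-compare} requires only $KE_q<\pi^2$, which is precisely the hypothesis on $\tilde U$), applies the weak maximum principle once, and reads off $Ku^*\ge 1$.  Since $f_{K,q}$ is a strictly increasing function of $E_q$ throughout the range $KE_q<\pi^2$, the maximizing sequence for $u$ is also a maximizing sequence for $E_q$, so $E^*=\sup_\Sigma E_q$.  For $K<0$ this is already the full conclusion: $Ku^*\ge 1$ gives $\sup_\Sigma u\le 1/K$, hence by monotonicity $E_q|_\Sigma\le \pi^2/(4K)$ pointwise, which is exactly the complement of $U$ in $\tilde U$.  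No second application of the maximum principle and no barrier are needed.

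Your instinct that (iii) is delicate is, however, justified for $K>0$.  There the inequality $Ku^*\ge 1$ yields only $\sup_\Sigma E_q\ge \pi^2/(4K)$, which does not by itself exclude $\Sigma$ from $U$.  At this point the paper asserts ``if $K>0$, then $E^*=\inf_\Sigma E_q$'', which would finish the argument, but monotonicity of $f_{K,q}$ in $E_q$ gives $E^*=\sup_\Sigma E_q$, not the infimum, so this step in the paper is not supported as written.  (There is also a boundedness issue for $K<0$: one needs $u^*<\infty$, i.e.\ $E_q$ bounded above on $\Sigma$, which the stated hypotheses on $\tilde U$ do not supply.)  So the short argument in the paper cleanly handles $K<0$ (given the obvious extra finiteness assumption) and your barrier idea, or some substitute, may indeed be what is needed to close $K>0$.
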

\begin{proof}
By Corollary \ref{cor:lam-convex}, the function $f\,_{K,q}:U\to\R$ as defined in (\ref{eq:E}) and (\ref{eq:modE}) is $\lambda$-convex with $\lambda = 1 - K f_{K,q} > 0$. Suppose $\Sigma$ is a weakly future-trapped $k$-dimensional submanifold of $U$ whose mean curvature vector field $H$ satisfies $H E_q \leq 0$. Let $u : \Sigma \to \R$ be the restriction of $f_{K,q}$ to $\Sigma$. As in equation (\ref{eq:laplacian-of-restriction}), \begin{align*}
\Delta u \,(p) &= \sum_{i = 1}^{k} (\overline{\nabla}^2 f_{K,q})_p(e_i,e_i) - k \, g(H_p, {(\overline{\nabla} f_{K,q})}_p) \\
&\geq k (1 - K f_{K,q}(p)) - k\,g(H_p, {(\overline{\nabla} f_{K,q})}_p).
\end{align*}

Simple computation yields 
$$
\overline{\nabla} f_{K,q}
=\begin{cases}
\overline{\nabla} E_q/2, & K=0,\\
\frac{\sin \sqrt{KE_q}}{2\sqrt{KE_q}} \, \overline{\nabla} E_q , & K\ne 0,
\end{cases}
$$ 
where the argument of $\sin$ can be imaginary here. The function $\sin \sqrt{KE_q}/(2\sqrt{KE_q})$ is non-negative as long as $KE_q \leq \pi^2$. Thus, $g(H_p, {(\overline{\nabla} f_{K,q})}_p) \leq 0 $ on $U$ since $g(H,\overline{\nabla}E_q) = HE_q \leq 0$. 

Since $(1 - K f_{K,q})|_{\Sigma}>0$,
we conclude that $u$ is subharmonic and satisfies the differential inequality \begin{equation} \label{eq:udiffineq}
\Delta u \geq k( 1 - Ku ) > 0.
\end{equation} 
By (\ref{eq:modE}), 
$u^* = \sup_{\Sigma} u < +\infty$. Since $\Sigma$ is stochastically complete, we can apply the weak maximum principle to obtain a sequence of points $p_n \in \Sigma$ such that $$
u(p_n) > u^* - \frac{1}{n} \quad \text{and} \quad \Delta u(p_n) < \frac{1}{n}.
$$ 
Evaluating (\ref{eq:udiffineq}) on $p_n$ and taking $n \to \infty$, we obtain 
$1 - K u^* = \cos \sqrt{KE^*} = 0$
, where $E^* = \lim_{n \to \infty} E_q(p_n)$. 


If $K = 0$, this is impossible. 
If $K > 0$
and $\sup_{\Sigma} E_q < \pi^2/4K$, then 
$KE^* < \pi^2/4 $ and $\cos\sqrt{KE^*} > 0$, a contradiction. Similarly, if $K < 0$ and $\inf_{\Sigma} E_q > \pi^2/4K$, 
then $KE^* < \pi^2/4$ and $\cos\sqrt{KE^*} > 0$, a contradiction.
Hence (ii) and (i).

Finally, suppose 
$K\ne 0$ and 
$U\subset \tilde{U}$,  where $\tilde{U}$ is the diffeomorphic image under $\exp_q$ of a  star-shaped region in $T_qM$ about $O$, and $\,E_q:\tilde{U}\to\R\,$  satisfies $\,E_q < \pi^2/K\,$ if $\,K> 0$ and 
$\,E_q > \pi^2/K\,$if $\,K<0$. 

Suppose $\Sigma$ is a stochastically complete spacelike submanifold in $\tilde{U}$. Choose a sequence $p_n \in \Sigma$ as above and let $E^* = \lim_{n \to \infty} E_q(p_n)$. 
By the above calculation, we know that $KE^* \geq \pi^2/4$. If $K > 0$, then $E^* \geq \pi^2/4K$ and if $K < 0$, $E^* \leq \pi^2/4K$. If $K>0$, then $E^* = \inf_{\Sigma} E_q$ and if $K < 0$, then $E^* = \sup_{\Sigma} E_q$. Thus, in either situation $\Sigma$ does not enter $U$.
Hence (iii).
\end{proof}

Note that for $K>0$, the bounds on $E_q$ in Theorem \ref{thm:trapped-SC} affect only spacelike geodesics, and for $K<0$, only timelike geodesics.

\begin{remark} 
Where a weakly future-trapped submanifold $\Sigma$ intersects the causal future of $q$,  the condition (\ref{eq:HE_q}), namely $H E_q \leq 0$, is immediate. Where $\Sigma$ enters the causal past of $q$,  (\ref{eq:HE_q}) implies $H=0$. At a point $p$ not causally related to $q$,  (\ref{eq:HE_q}) restricts $H$  to a subcone of 
the cone of future directed vectors at
 $p$\,:  either $H\ne 0$ lies in a closed half-cone of 
the cone of future directed vectors at
  $p$, or $H$ is null and future-pointing, or $H=0$. 

For example, in Minkowski space, consider points $ v\in \Sigma$ where $v$ is  spacelike. If  $v$ approaches $v_0\ne 0$ in the future null cone of the origin $0$, these half-cones approach  the causal future cone of $0$; if  $ v$ approaches  $ v_0\ne 0$ in the past null cone of  $0$, these half-cones approach the light ray through $v_0$. \end{remark}

\section{Conclusion}

We have demonstrated  a close connection between 
sectional curvature bounds  of the form $\mathcal{R}\le K\,$ and space-time convex and $\lambda$-convex functions ($\lambda > 0$).  
We have constructed new $\lambda$-convex functions.
 We have used these functions to find   new  domains
that do not support trapped submanifolds.  
 
Our goal has been  to explain  some viewpoints and tools, rather than to give an exhaustive treatment. We plan a more systematic  treatment of results in future.
              
Note that the $\lambda$-convex functions considered here are based on  signed energy functions.  It would be interesting to identify other classes of $\lambda$-convex functions to which Theorem \ref{thm:miots local} can be applied.


\begin{thebibliography}{XX}


%

%

\bibitem [AB08]{ab-lorentz}
S. Alexander, R. Bishop,
{\em Lorentz and semi-Riemannian spaces with Alexandrov curvature bounds}, Comm. Anal. Geom.
{\bf 16} (2008),251-282.

%

\bibitem [ABL16]{abl}
L. Al\'{i}as, G. Bessa, J. de Lira,
{\em Geometric analysis of the Lorentzian distance function on trapped submanifolds}, Class. Quantum Grav,
{\bf 33} (2016), 125007 (28 pp).

\bibitem[AHP10]{ahp}
L. Al\'{i}as, A. Hurtado, V. Palmer  Geometric analysis of Lorentzian distance function on spacelike hypersurfaces Trans. Amer. Math. Soc. {\bf 362} (2010), 5083-5106.


\bibitem[AH98]{ah}
L. Andersson, R. Howard,
\emph{Comparison and rigidity theorems in semi-Riemannian geometry},
Comm. Anal. Geom. {\bf 6} (1998), 819-877.

\bibitem[BGS85]{bgs} W. Ballmann, M. Gromov, V. Schroeder, {\em Manifolds of nonpositive curvature}, Progress in Mathematics, vol. 61, Birkhauser, Boston (1985).










%
%


%
%
%


%
%
 
 \bibitem[EGK03]{egk} 
 F. Erkekoglu, E. Garcia-Rio E, D. Kupeli, {\em On level sets of Lorentzian distance function}, Gen. Rel. Grav. {\bf35} (2003), 1597?615.
 
  \bibitem[GS00]{gs10}
G.Galloway, J. Senovilla, {\em Singularity theorems based on trapped submanifolds of arbitrary curvature}, Class. Quantum Grav. {\bf 27} (2010) (10pp).







%

\bibitem[GI01]{gi}
G. Gibbons, A. Ishibashi,
{\em Convex functions and spacetime geometry}, Classical Quantum Gravity {\bf 18} (2001), no. 21, 4607 -4627.







\bibitem[GW82]{gw} R. Greene, H. Wu, {\em 
Gap theorems for noncompact Riemannian manifolds}, Duke Math. J. {\bf 49} (1982), 731 - 756.

\bibitem[Har82]{harris}
S. Harris,
{\em A triangle comparison theorem for Lorentz manifolds}, Indiana
Math. J. {\bf 31} (1982), 289-308.

\bibitem[Har96]{harris-app}
S. Harris,
{\em Appendix A: Jacobi fields and Toponogov's theorem for Lorentzian manifolds}, in J. Beem, P. Ehrlich, K. Easley,
{\em Global Lorentzian Geometry}, 2nd ed.
Dekker, New York, 1996, 567-572.
%










\bibitem[Imp12]{imp}
Impera D 2012 Comparison theorems in Lorentzian geometry and applications to spacelike hypersurfaces J. Geom. Phys. 62 412?26

\bibitem[Kar87]{Kr}
H. Karcher, {\em Riemannian Comparison Constructions}. S. S. Chern, (ed.),
{\em Global Differential Geometry}, MAA Studies in Math., {\bf 27}, Math.
Assoc. Amer. 1987.





%




%











\bibitem[PRS05]{prs1}
S. Pigola, M. Rigoli, A. Setti,   
{\em Maximum principles on Riemannian manifolds and
applications}, 
Memoirs of the American Mathematical Society  {\bf174} no 822, Providence, RI, 2005.

\bibitem[PRS08]{prs2}
S. Pigola, M. Rigoli, A. Setti, {\em Vanishing and Finiteness Results in Geometric Analysis: A
Generalization of the Bochner Technique},  Progress in Mathematics {\bf 266}, Birkhauser, Basel, 2008.




%
%


%


\bibitem[Ptr07]{p}
A. Petrunin, 
\emph{Semiconcave functions in Alexandrov's geometry},
Surveys in Differential Geometry {\bf 11}, Int. Press, Somerville, MA, (2007), 137-201.


\bibitem[Top59]{toponogov}
V. Toponogov,
{\em Riemannian spaces of curvature bounded below}, Usp. Mat. Nauk. {\bf 14} (1959), 87-130.

%




\end{thebibliography}
\end{document}